\theoremstyle{plain}
\newtheorem{theorem}{Theorem}[section]
\theoremstyle{definition}
\newtheorem{remark}[theorem]{Remark}
\newtheorem{lemma}[theorem]{Lemma}
\def\L{\mathcal{L}}
\def\R{\mathbb{R}}
\def\e{\mathbb{E} }
\def\p{\mathbb{P} }
\def\X{\mathbb{X}}
\def\Y{\mathbb{Y}}
\newcommand{\ignore}[1]{ }
\def\a{\alpha}
\def\b{\beta}
\def\RR{{\rm RR}}
\def\CC{{\rm CC}}
\def\CR{{\rm CR}}
\def\RC{{\rm RC}}
\def\SS{{\rm SS}}
\def\IRR{I_{\RR}}
\def\ICC{I_{\CC}}
\def\ICR{I_{\CR}}
\def\IRC{I_{\RC}}
\def\ISS{I_{\SS}}
\def\WR{W_{\RR}}
\def\WC{W_{\CC}}
\def\WS{W_{\SS}}
\def\bfD{{\bf D}}
\def\bfC{{\bf C}}
\def\.{\hskip.06cm}
\title[Attacks and alignments of rooks]{Attacks and alignments:  rooks,  set partitions, and permutations} 
\author{Richard Arratia}
\address{University of Southern California \\ Los Angeles, CA, USA}
\email{rarratia@usc.edu}
\author{Stephen DeSalvo}
\address{University of California, Los Angeles \\ Los Angeles, CA, USA}
\email{stephen.desalvo@math.ucla.edu}
\date{June 29, 2021}
\begin{document}
\begin{abstract}
We consider uniformly random set partitions of size $n$ with exactly $k$ blocks, and uniformly random permutations of size~$n$ with exactly~$k$ cycles, under the regime where $n-k \sim t\sqrt{n}$, $t>0$.
In this regime, there is a simple approximation for the entire process of component counts;  in particular, the number of components of size 3 converges in distribution to Poisson with mean $\frac{2}{3}t^2$ for set partitions and mean $\frac{4}{3}t^2$ for permutations, and with high probability all other components have size one or two.
These  approximations are proved, with preasymptotic error bounds, using combinatorial bijections for placements of  $r$ rooks on a triangular half of an $n\times n$ chess board, together with the Chen--Stein method for processes of indicator random variables.
\end{abstract}

\maketitle

\section{Introduction}
\label{new_introduction}

We exploit two combinatorial bijections, each involving non-attacking rooks on a lower triangular chess board, to describe the component structure of set partitions and permutations under the regime where the size~$n$ and the number of components~$k$ satisfies $r \equiv r(n,k) := n-k \sim t\, \sqrt{n}$, $t>0$. 
A similar analysis, focused on the approximation of Stirling numbers, and exploiting only the existence of the bijections rather than their explicit forms, was carried out by the authors in~\cite{Stirling} using Chen--Stein Poisson approximation~\cite{chen1975poisson}. 
In this paper we apply the Poisson process approximation approach outlined in~\cite{ArratiaGoldstein} to fully characterize the component structure. 

The total variation distance between the \emph{distributions} of two random variables $X$ and $Y$ in $\R^n$ is defined as
\[ d_{TV}(\L(X), \L(Y)) = \sup_{A \subset \R^n} |\p(X \in A) - \p(Y \in A)|, \]
where the $\sup$ is taken over all Borel sets $A$.  
When there is no confusion, we instead write $d_{TV}(X,Y)$ to denote the total variation distance between the distributions $\L(X)$ and $\L(Y)$. 

Our main results, both of which follow from Lemma~\ref{theorem:main}, are summarized below.
The first result concerns the cycle lengths in a random permutation of size~$n$ into exactly~$k$ cycles.

\begin{theorem}\label{cycle:sizes:extension}
For each integer $i \geq 1$, let $C_i \equiv C_i(n,k)$ denote the number of cycles of size~$i$ in a random permutation of size~$n$ into exactly $k$ cycles, and denote the joint distribution of cycle sizes by $\bfC \equiv \bfC(n,k) := (C_1, C_2, \ldots)$.
Let $W_1 \equiv W_1(n,k)$ denote a Poisson random variable with $\lambda_1 := \e W_1 = \frac{4}{3} \frac{(n-k)^2}{n}$. 
If $k(1), k(2), \ldots$ is an increasing sequence of nonnegative integers such that $n - k(n) \sim t\, \sqrt{n}$ as $n$ tends to infinity, then we have (with $k \equiv k(n)$) 
 \begin{align}
\label{dtv1}
 \begin{split}
 d_{TV}(\ \bfC,\ (n-2(n-k)+W_1, & (n-k)-2W_1, W_1, 0, 0, \ldots)\ )  \\ 
& = O\left(\exp\left(\frac{2}{3}{\frac{(n-k)^2}{n}}\right)\, \cdot \,  \frac{(n-k)^3}{n^2}\right). 
\end{split}
\end{align}
\end{theorem}

The second result is an analogous theorem for the block sizes in a random set partition of size~$n$ into exactly~$k$ blocks.

\begin{theorem}\label{block:sizes:extension}
For each integer $i \geq 1$, let $D_i \equiv D_i(n,k)$ denote the number of blocks of size~$i$ in a  random set partition of size~$n$ into exactly $k$ blocks, and denote the joint distribution of block sizes by $\bfD \equiv \bfD(n,k) := (D_1, D_2, \ldots)$. 
Let $W_2 \equiv W_2(n,k)$ denote a Poisson random variable with $\lambda_2 := \e W_2 = \frac{2}{3}\frac{(n-k)^2}{n}$.
If $k(1), k(2), \ldots$ is an increasing sequence of nonnegative integers such that $n - k(n) \sim t\, \sqrt{n}$ as $n$ tends to infinity, then we have (with $k \equiv k(n)$)
\begin{align}\label{dtv}
 \begin{split}
 d_{TV}(\ \bfD ,\ (n-2(n-k)+W_2, & (n-k)-2W_2, W_2,0,0,\ldots)\ ) \\
  & = O\left(\exp\left(\frac{4}{3}{\frac{(n-k)^2}{n}}\right)\, \cdot \,  \frac{(n-k)^3}{n^2}\right). 
\end{split}
\end{align}
\end{theorem}

In Section~\ref{the:bijection}, we describe the two bijections involving the placements of rooks on a chess board, after which we provide a qualitative explanation of the theorem above. 
In Section~\ref{poisson:approximation}, we define the random variables of interest and outline the Poisson approximation approach.
In Section~\ref{rook_process}, we combine these two techniques to prove preasymptotic bounds for Poisson process approximation, summarized in Lemma~\ref{theorem:main}, which are used to prove the main results above.   
Finally, in Section~\ref{applications} we present some applications and corollaries. 

\section{Approach}
\subsection{Rook bijections}
\label{the:bijection}

Each of these structures, viz., set partitions and permutations, has an intimate relation with placements of $r$ non-challenging rooks on the triangular board 
$$
   B \equiv B_n := \{(i,j):  1 \le i < j \le n \}.
$$   
For the case of set partitions, two rooks  \emph{challenge} or \emph{attack} as per the usual rules of chess if they lie in the same row or column.  For the case of permutations, two rooks are said to \emph{challenge}  or \emph{attack} if they lie in the same column.
We write $S(n,k)$ for the Stirling number of the second kind, counting the number of partitions of a set of size $n$ into a set of exactly $k$ blocks.  We write $s(n,k)$ for the Stirling number of first kind, whose absolute value counts the number of permutations of a set of size $n$ having exactly $k$ cycles.   
In either case, the Stirling number is equal to the number of ways to place $r$ unlabelled rooks on $B_n$ with no attacks.

We utilize two different bijections involving the placements of rooks on the board $B$, one for set partitions and the other for permutations.  What follows is a partial description of those bijections for the purpose of understanding the proofs of our results, and we refer the interested reader to~\cite{rooktheorynotes} for the complete proofs of these bijections.  

\begin{figure}[h]
\begin{subfigure}[t]{0.25\textwidth}
\ytableausetup{centertableaux, boxsize=1.2em}
\begin{ytableau}
\none[{\color{gray} 6}] & \none  &\none  &\none  &\none &\none &\none  \\ 
\none[{\color{gray} 5}] & \none  &\none  &\none  &\none &\none &\ \\ 
\none[{\color{gray} 4}] & \none  &\none  &\none  &\none &\  &\ \\ 
\none[{\color{gray} 3}] & \none  &\none  &\none  &\ &\  &\ \\ 
\none[{\color{gray} 2}] & \none  &\none  &\ & a  &\ &b \\ 
\none[{\color{gray} 1}] & \none  &\  &\ &\  &\ & \ \\
\none & \none[{\color{gray} 1}] & \none[{\color{gray} 2}] & \none[{\color{gray} 3}] & \none[{\color{gray} 4}] & \none[{\color{gray} 5}] & \none[{\color{gray} 6}]
\end{ytableau}
\caption{RR}\label{block3_RR}
\end{subfigure}~
\begin{subfigure}[t]{0.25\textwidth}
\begin{ytableau}
\none[{\color{gray} 6}] & \none  &\none  &\none  &\none &\none &\none \\ 
\none[{\color{gray} 5}] & \none  &\none  &\none  &\none &\none &\ \\ 
\none[{\color{gray} 4}] & \none  &\none  &\none  &\none &\  &a \\ 
\none[{\color{gray} 3}] & \none  &\none  &\none  &\ &\  &\ \\ 
\none[{\color{gray} 2}] & \none  &\none  &\ &\  &\ &b \\ 
\none[{\color{gray} 1}] & \none  &\  &\ &\  &\ & \ \\
\none & \none[{\color{gray} 1}] & \none[{\color{gray} 2}] & \none[{\color{gray} 3}] & \none[{\color{gray} 4}] & \none[{\color{gray} 5}] & \none[{\color{gray} 6}]
\end{ytableau}
\caption{CC}
\end{subfigure}~
\begin{subfigure}[t]{0.25\textwidth}
\begin{ytableau}
\none[{\color{gray} 6}] & \none  &\none  &\none  &\none &\none &\none \\ 
\none[{\color{gray} 5}] & \none  &\none  &\none  &\none &\none &\ \\ 
\none[{\color{gray} 4}] & \none  &\none  &\none  &\none &b  &\ \\ 
\none[{\color{gray} 3}] & \none  &\none  &\none  &\ &\  &\ \\ 
\none[{\color{gray} 2}] & \none  &\none  &\ &a  &\ &\ \\ 
\none[{\color{gray} 1}] & \none  &\  &\ &\  &\ & \ \\
\none & \none[{\color{gray} 1}] & \none[{\color{gray} 2}] & \none[{\color{gray} 3}] & \none[{\color{gray} 4}] & \none[{\color{gray} 5}] & \none[{\color{gray} 6}]
\end{ytableau}
\caption{RC}
\end{subfigure}
~
\begin{subfigure}[t]{0.25\textwidth}
\begin{ytableau}
\none[{\color{gray} 6}] & \none  &\none  &\none  &\none &\none &\none \\ 
\none[{\color{gray} 5}] & \none  &\none  &\none  &\none &\none &\ \\ 
\none[{\color{gray} 4}] & \none  &\none  &\none  &\none &a  &\ \\ 
\none[{\color{gray} 3}] & \none  &\none  &\none  &\ &\  &\ \\ 
\none[{\color{gray} 2}] & \none  &\none  &\ &b &\ &\ \\ 
\none[{\color{gray} 1}] & \none  &\  &\ &\  &\ & \ \\
\none & \none[{\color{gray} 1}] & \none[{\color{gray} 2}] & \none[{\color{gray} 3}] & \none[{\color{gray} 4}] & \none[{\color{gray} 5}] & \none[{\color{gray} 6}]
\end{ytableau}
\caption{CR}
\end{subfigure}
\caption{The board $B_n:= \{(i,j):  1 \le i < j \le n )$, drawn in French notation:  square $(i,j)$ is in the $i$th row from bottom to top, and in the $j$th column from left to right; the case $n=6$ is illustrated.   A rook at $(i,j)$ forces $i$ and $j$ to belong to the same component, implying a block of size at least two.
For two rooks, say $a$ and $b$ with $1 \le a < b \le r$, there are four kinds of possible coincidence, according to the row or column coordinate of rook $a$ being equal to the row or column coordinate of rook $b$,  as shown.
In the case of set partitions, \emph{attacks} are the RR and CC coincidences;  in the case of permutations, \emph{attacks} are the CC coincidences.
In both cases, the non-attack coincidences are \emph{alignments}, implying blocks of size three or more.
}
\label{block 2}
\end{figure}

The first bijection is for set partitions, and requires that no two rooks lie in the same row or column.
A board with no rooks corresponds to the set partition $\{ \{1\}, \{2\}, \ldots, \{n\} \}$, i.e., the set partition with~$n$ blocks of size~$1$.
The placement of a single rook at coordinate $(i,j)$, $1\leq i < j \leq n    $, combines the two separate blocks $\{i\}, \{j\}$, into a single block of size~$2$, i.e., $\{i,j\}$.
The placement of two rooks in coordinates $(i,j)$ and $(j,\ell)$, $1 \leq i < j < \ell \leq n$,
where the row-coordinate of one rook is the column-coordinate of the other, is what we call an \emph{alignment}.  
This corresponds to combining blocks $\{i,j\}$ and $\{j,\ell\}$ into a single block $\{i,j,\ell\}$ of size~$3$; see Figure~\ref{block 3}. 
Further alignments correspond to combining blocks in an analogous manner. 

The second bijection is for permutations, and requires that no two rooks lie in the same column.
A board with no rooks corresponds to the identity permutation $(1)(2)\cdots (n)$. 
The placement of a single rook at coordinate $(i,j)$, $1 \leq i<j \leq n$, creates a cycle of length two, namely, $(i\, j)$, without changing the other fixed points. 
The placement of another rook at coordinate $(i,k)$, $1 \leq i < k \leq n$ and $j \neq k$, i.e., in the same row, creates a cycle of length three, either $(i\ j\ k)$ or $(i\ k\ j)$ depending on whether $j < k$ or $j > k$, respectively, and corresponds to the event RR in Figure~\ref{block3_RR}. 
In general, we have the following rules:
\begin{enumerate}
\item element $i$ is a fixed point if there are no rooks in row $i$ or column $i$, $i \geq 1$;
\item a cycle of length 2, say $(i\ j)$, occurs when there is exactly one rook in row $i$ and column $j$, and no other rooks in row $j$;
\item a cycle of length 3 or more occurs when two or more rooks lie in the same row, or two or more rooks are in alignment.  
\end{enumerate}

A qualitative interpretation of Theorem~\ref{block:sizes:extension} is thus as follows, all relations of course being approximate: the number of blocks of size~$4$ or more is 0. 
The number of blocks of size~3 is the number of single alignments.
The number of blocks of size~2 is the number of rooks minus twice the number of single alignments.
The number of blocks of size~1 is $n$ minus twice the number of rooks (the factor of two is because each rook combines two singletons), plus the number of single alignments to prevent over double-counting (each alignment converts three singletons into a single block of size 3, whereas a non-alignment converts four singletons into two blocks of size 2).
The qualitative interpretation of Theorem~\ref{cycle:sizes:extension} is the same, with blocks replaced with cycles.

\subsection{Poisson approximation}
\label{poisson:approximation}
Now consider the ${n \choose 2}^r$ ways to place $r$ \emph{distinguishable} rooks on the board $B_n$, 
even allowing two or more rooks on the same square, and consider all
such placements as equally likely.
Write 
$$
   W_{\RR} := \text{ the number of pairs of rooks placed in the same row as each other},
$$
so that $W_\RR$ is a random variable, with $ 0 \le W_\RR \le {r \choose 2}$.
It is easy to see the asymptotic relation, that for each $1 \le a < b \le r$, we have 
$$ 
\p(  \text{rooks $a,b$ are placed in the same row as each other}) \sim \frac{4}{3n}
$$
(see Equation~\eqref{parr}), hence, if $r,n \to \infty$, by the linearity of expectation we have
$$
  \e W_\RR \sim \frac{2 r^2}{3n}.
$$
The same considerations hold for $W_\CC$, the number of pairs of rooks placed in the same column as each other.

For any $t \in (0,\infty)$, for the regime in which $n,k \to \infty$ with
\begin{equation}\label{def r}
   r   \sim t \sqrt{n},
\end{equation}
the net result of the above is that the expected numbers  of attacks have nonzero limits, given by 
\begin{equation}\label{attack limits}
 \e (W_\RR  +W_\CC) \to \frac{4}{3} \, t^2,   \ \   \e W_\CC   \to \frac{2}{3} \, t^2.
\end{equation}

Not surprisingly, Poisson approximations for the situation of \eqref{def r} are valid, implying that
$$
    \p(W_\RR+W_\CC = 0) \to \exp \left( -\frac{4}{3} \, t^2 \right), 
      \ \  \p(W_\CC = 0) \to \exp \left( -\frac{2}{3} \, t^2 \right) .
$$
Combining this with  the bijections for set partitions counted by  $S(n,k)$, and permutations counted by $|s(n,k)|$, and the placement of $r$ non-attacking rooks on the board $B_n$, the result is that in the regime given by \eqref{def r},  asymptotics for the Stirling numbers are given by
\begin{align}\label{result 1}
\begin{split}
S(n,k) & \sim   \frac{1}{(n-k)!} {n \choose 2}^{n-k} \exp \left(-\frac{4}{3} \, t^2 \right),   \\
|s(n,k)| & \sim  \frac{1}{(n-k)!}  {n \choose 2}^{n-k} \exp \left(-\frac{2}{3} \, t^2 \right).
\end{split}
\end{align}
Indeed,  
\cite{MoserWyman2, MoserWyman1}  and
\cite{Louchard1, Louchard2}
provide asymptotics for Stirling numbers, but fail to provide quantitative bounds for the regime
in \eqref{def r},  while \cite{Stirling} gives a version of 
\eqref{result 1}, including preasymptotic bounds, by using the Chen--Stein method for Poisson approximation. 

The above considerations only involved the event of having no attacks, when $r$ rooks are placed independently and uniformly distributed over the board $B_n$; that is, the only information extracted from the bijection for non-attacking rooks is equi-numerosity.
However, as was shown in Section~\ref{the:bijection}, the bijection also determines the entire block structure of the set partition, or cycle structure of the permutation.
In what follows, for the sake of uniform terminology, we will describe the cycle structure of a permutation as its \emph{block} structure.  Conditional on there being no attacks, the placement of $r$ independent rooks determines the block structure via the indicators of alignments: for set partitions, these are the CR and RC coincidences, and for permutations, these are the RR, CR, and RC coincidences; see Figure~\ref{block 3}.  
A single alignment causes two blocks of size 2 to merge into a block of size 3.
In general, an $\ell$-fold alignment involves $\ell+1$ rooks, and gives rise to a block of size $\ell+2$.
We call a $2$-fold alignment a \emph{double alignment}. 
For the regime given by \eqref{def r}, the expected number of $\ell$-fold alignments, for $\ell \ge 2$, tends to zero,
so with high probability a uniformly random set partition or permutation has no blocks of size 4 or larger.
Furthermore, in this situation, the number of blocks of size 3 is equal to the number of pairs of rooks in alignment.
The expected numbers of alignments for the two cases are given by\footnote{There is a numerical coincidence which caused us much confusion in early versions:  comparing \eqref{attack limits}  with \eqref{alignment limits},  the constants involved are
$\frac{4}{3}$ and $\frac{2}{3}$ for attacks, but   $\frac{2}{3}$ and $\frac{4}{3}$ for alignments;  in each display,  the situation for set partitions is given first, and the situation for permutations is given second.
}

\begin{equation}\label{alignment limits}
 \e (W_\RC + W_\CR)  \to \frac{2}{3} \, t^2,   \ \  \   \e (W_\RR + W_\RC + W_\CR)  \to \frac{4}{3} \, t^2,
 \end{equation}
see Equation~\eqref{parc}.  Hence, for the regime given by \eqref{def r}, by proving a Poisson approximation for the number of alignments, conditional on the event of having 
no attacks, we are able to analyze the full block structure.

The \emph{conditional} Poisson limit required above follows from a Poisson process limit, and uses the full power of the process version of the Chen--Stein method, as given by \cite[Theorem 2]{ArratiaGoldstein}.

\begin{figure}
\begin{subfigure}[h]{0.3\textwidth}
\begin{ytableau}
\none[{\color{gray} 6}] & \none  &\none  &\none  &\none &\none &\none \\ 
\none[{\color{gray} 5}] & \none  &\none  &\none  &\none &\none &\ \\ 
\none[{\color{gray} 4}] & \none  &\none  &\none  &\none &b  &\ \\ 
\none[{\color{gray} 3}] & \none  &\none  &\none  &\ &\  &\ \\ 
\none[{\color{gray} 2}] & \none  &\none  &\ &a  &\ &\ \\ 
\none[{\color{gray} 1}] & \none  &\  &\ &\  &\ & \ \\
\none & \none[{\color{gray} 1}] & \none[{\color{gray} 2}] & \none[{\color{gray} 3}] & \none[{\color{gray} 4}] & \none[{\color{gray} 5}] & \none[{\color{gray} 6}]
\end{ytableau}
\end{subfigure}~
\begin{subfigure}[h]{0.5\textwidth}
\[\xymatrix{
 \{i\}\ar[dr]_{(i,j)} \\
 \{j\} \ar[r]& \{i,j\} \ar[r] & \{i,j,k\} \\
 & \{k\} \ar[ru]^{(j,k)}
 }
\]
\end{subfigure}
\caption{A block of size~$3$ is formed by two aligned rooks. For the case of set partitions, this is a CR or RC alignment, with one rook in coordinate $(i,j)$ and another rook  in coordinate $(j,k)$, $1\leq i < j < k \leq n$.
The situation illustrated has $i=2,j=4,k=5,n=6$.}
\label{block 3}
\end{figure}

\section{Proofs}
\subsection{Statement of the main lemma}
\label{rook_process}
We define the \emph{rook coincidence process} $\X = (X_\alpha)_{\alpha\in I}$, a dependent process of indicator random variables, where the index set $I = \{ \alpha = (\{a,b\},s)\}$ consists of all unordered pairs of rooks $1 \leq a < b \leq r :=n-k$, and a marking $s \in \{\RR, \CC, \RC, \CR, \SS\}$. 
Note that $|I|=5 {r \choose 2}$. 
Here $s=\RR$ means $X_\alpha$ indicates whether rooks $a$ and $b$ are in the same row (but not the same square), $s=\CC$ means $X_\alpha$ indicates whether rooks $a$ and $b$ are in the same column (but not the same square), $s = \RC$ means $X_\alpha$ indicates whether the column number of rook $a$ is the same as the row number of rook $b$, $s = \CR$ means $X_\a$ indicates whether the row number of rook $a$ is the same as the column number of rook $b$, and $s = \SS$ means that rooks $a$ and $b$ occupy the same square.

\begin{remark}{\rm
For every $\a \in I$, we define an index set, $D_\a$, to consist of all indices $\beta \in I$ which share at least one rook with $\a$.
The collection of random variables of the rook coincidence process $\{X_\a\}_{\a\in I}$ is \emph{dissociated} with respect to the family $\{D_\alpha\}$, where dissociation is defined as the requirement that  $X_\a$ is independent of all $X_\beta$ for $\beta \in I \setminus D_\a$. 
}\end{remark}

The following theorem gives a quantitative bound on the total variation distance between the joint distribution of \emph{dependent} Bernoulli random variables and a joint distribution of \emph{independent} Poisson random variables. 
We quote a version below which is a corollary to \cite[Theorem~2]{ArratiaGoldstein}, applicable to a collection of \emph{dissociated} Bernoulli random variables.

\begin{theorem}[\cite{ArratiaGoldstein}]\label{moments:theorem}
Let $I$ denote some index set.  Let $\X := (X_\a)_{\a \in I}$ denote a joint distribution of dissociated indicator random variables. 
Define $\Y := (Y_\alpha)_{\alpha\in I}$, a joint distribution of \emph{independent} Poisson random variables, where $\e Y_\alpha = \e X_\alpha$ for all $\alpha \in I$.  
For each $\a$, let $D_\a$ denote the dependency neighborhood of $X_\a$. 
Let $p_\a := \e X_\a$, $p_{\a\b} := \e X_{\a}X_{\b}$, $\a,\b \in I$, and
\begin{align*} 
b_1 := \sum_{\a \in I} \sum_{\b\in D_\a} p_\a p_\b, & \qquad b_2 := \sum_{\a \in I} \sum_{\substack{\b \ne \a \\ \b\in D_\a}} p_{\a\b}.
\end{align*}
Then we have 
\[ d_{TV}(\L(\X),\L(\Y)) \leq 4(b_1 + b_2). \]
Let the index set $I$ be partitioned into disjoint, non-empty subsets, say $I_1, I_2, \ldots, I_\nu$, and let 
\begin{align*} W_i := \sum_{\a \in I_i} X_\a, & \qquad S_i := \sum_{\a \in I_i} Y_\a. \end{align*}
Then we have 
\[ d_{TV}(\L(W_1, W_2, \ldots, W_\nu),\ \L(S_1, S_2, \ldots, S_\nu)) \leq 4(b_1 + b_2). \]
\end{theorem}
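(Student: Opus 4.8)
The plan is to obtain this theorem as a direct specialization of the Poisson process approximation bound of Arratia and Goldstein, \cite[Theorem~2]{ArratiaGoldstein}, which in its general form bounds the total variation distance between a process of indicators and the corresponding independent Poisson process by $4(b_1+b_2+b_3)$, where $b_1,b_2$ are exactly the sums appearing in the statement and $b_3$ is a third term of the form
\[ b_3 \;=\; \sum_{\a\in I} \e\,\bigl|\, \e\bigl( X_\a \,\bigm|\, \sigma(X_\b : \b \in I\setminus D_\a)\bigr) - p_\a \,\bigr|, \]
measuring the extent to which $X_\a$ fails to be independent of the indicators outside its dependency neighborhood. So the first thing I would do is quote that general inequality verbatim (being careful to match both the constant $4$ and the precise definition of $b_3$ used there), and then kill the $b_3$ term.

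The key observation is that the dissociation hypothesis of the Remark says precisely that, for each $\a$, the variable $X_\a$ is independent of the family $\{X_\b : \b \in I\setminus D_\a\}$ — which, taken in the natural (joint) sense, means $X_\a$ is independent of $\sigma(X_\b : \b\in I\setminus D_\a)$. Consequently $\e\bigl(X_\a \bigm| \sigma(X_\b : \b\in I\setminus D_\a)\bigr) = \e X_\a = p_\a$ almost surely, so every summand of $b_3$ vanishes and $b_3 = 0$. Plugging this into the general bound yields the first claimed inequality $d_{TV}(\L(\X),\L(\Y)) \le 4(b_1+b_2)$.

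For the second assertion I would use the contraction (data-processing) property of total variation distance: if $f\colon \R^I\to\R^d$ is any Borel map, then for a Borel set $A\subseteq\R^d$ one has $\p(f(\X)\in A) - \p(f(\Y)\in A) = \p(\X\in f^{-1}(A)) - \p(\Y\in f^{-1}(A))$, and taking the supremum over $A$ gives $d_{TV}(\L(f(\X)),\L(f(\Y))) \le d_{TV}(\L(\X),\L(\Y))$. Applying this to the (continuous, hence Borel) lumping map $(x_\a)_{\a\in I}\mapsto\bigl(\sum_{\a\in I_i} x_\a\bigr)_{i=1}^d$, which carries $\X$ to $(W_1,\dots,W_d)$ and $\Y$ to $(S_1,\dots,S_d)$ — with each $S_i$ a finite sum of independent Poissons, hence itself Poisson with mean $\sum_{\a\in I_i} p_\a=\lambda_i$ — gives $d_{TV}(\L(W_1,\dots,W_d),\L(S_1,\dots,S_d)) \le d_{TV}(\L(\X),\L(\Y)) \le 4(b_1+b_2)$, as required.

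There is no genuinely hard analytic step here: the contraction property is standard, and the vanishing of $b_3$ is immediate from dissociation. The only point that needs care — and the only place the argument could go wrong — is the interface with the cited theorem: one must confirm that the constant is indeed $4$ in the version of \cite[Theorem~2]{ArratiaGoldstein} being invoked, and that "dissociated" is used in the strong sense (joint independence of $X_\a$ from all $X_\b$ with $\b\notin D_\a$, not merely pairwise independence), since it is exactly this joint independence that forces the conditional expectation in $b_3$ to collapse to $p_\a$.
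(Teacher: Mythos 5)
Your derivation is exactly the intended one: the paper gives no proof of this statement, quoting it as a corollary of \cite[Theorem~2]{ArratiaGoldstein}, whose process bound (of the form $2(2b_1+2b_2+b_3)$) reduces to $4(b_1+b_2)$ precisely because dissociation makes the $b_3$ term vanish, and the lumped version follows from the contraction property of total variation under the Borel map $(x_\a)\mapsto(\sum_{\a\in I_i}x_\a)_i$. Your only imprecision --- writing the general bound as $4(b_1+b_2+b_3)$ rather than with coefficient $2$ on $b_3$ --- is immaterial since $b_3=0$, and you correctly flag this interface point as the one needing verification.
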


We now apply Theorem~\ref{moments:theorem} to obtain an explicit and completely effective upper bound on the total variation distance between the rook coincidence process and a corresponding joint distribution of independent Poisson random variables. 

\begin{lemma}\label{theorem:main}
For each $n \geq 2$ and $k \geq 1$, let $\X$ denote the rook coincidence process, and let $\Y$ denote a corresponding joint distribution of independent Poisson random variables with $\e \Y = \e \X$.  
Define $r := n-k$, $N := \binom{n}{2}$, and 
\begin{align}\label{eq:b}
\begin{split}
 d\ :=\  4&\binom{r}{2}(2r-3) \cdot \left( \left[20\cdot \binom{n}{3}^2 N^{-4}\right] + 6 \cdot \left[\binom{n}{3}N^{-2} \cdot \binom{n}{2} N^{-2}\right] + \right. \\
& \left. \left[\binom{n}{2} N^{-2}\right]^2\right) + 4\binom{r}{3} N^{-3}\left[92\binom{n}{4}+36\,\binom{n}{3}\frac{5n-7}{4} + 6 \binom{n}{2}\right].
\end{split}
 \end{align}
Then we have 
\begin{align*}
\nonumber d_{TV}(\mathcal{L}(\mathbb{X}), \mathcal{L}(\mathbb{Y})) &  \leq d = O\left(\frac{(n-k)^3}{n^2}\right). 
\end{align*}
Furthermore, let $S_{\RR}$, $S_{\CC}$, $S_{\RC}$ $S_{\CR}, S_{\SS}$ denote independent Poisson random variables with expected values $\e \WR$, $\e \WC$, $\e W_{\RC}$, $\e W_{\CR}$, and $\e W_{\SS}$, respectively. 
We have
\[ d_{TV}(\L(\WR, \WC, W_{\RC}, W_{\CR}, W_{\SS}), \L(S_{\RR}, S_{\CC}, S_{\RC}, S_{\CR}, S_{\SS})) \leq d = O\left(\frac{(n-k)^3}{n^2}\right). \]
\end{lemma}

We prove Lemma~\ref{theorem:main} in the next section. 

\subsection{Proof of Lemma~\ref{theorem:main}}
\label{proof:of:bounds}
Recall the index set $I$ from Section~\ref{rook_process}, and we further partition set~$I$ into the following four index sets
\begin{align*}
 \IRR & :=  \{(\{a,b\}, \RR), 1 \leq a < b \leq r \} \\
 \ICC & :=  \{(\{a,b\}, \CC), 1 \leq a < b \leq r \}, \\
 \IRC & :=  \{(\{a,b\}, \RC), 1 \leq a < b \leq r \}, \\
 \ICR & :=  \{(\{a,b\}, \CR), 1 \leq a < b \leq r \}, \\
 \ISS & :=  \{(\{a,b\}, \SS), 1 \leq a < b \leq r \},
\end{align*}
where
\[  I = I_{\RR} \cup I_{\CC} \cup I_{\CR} \cup I_{\RC} \cup I_{\SS}.\]
Recall, also from Section~\ref{rook_process}, the definition of the indicator random variables $X_\alpha$, $\alpha\in I$. 

Define 
\begin{align*}
p_\alpha := \e X_\alpha,  & \qquad p_{\alpha\beta} := \e X_{\alpha}X_{\beta}, \qquad \qquad \alpha,\beta\in I;
\end{align*}
\begin{align*}
b_1^A := \sum_{\alpha \in \IRR \cup \ICC} \sum_{\beta} p_\alpha p_\beta, & \qquad b_2^A  := \sum_{\alpha \in \IRR \cup \ICC} \sum_{\alpha \neq \beta} p_{\alpha \beta}, \end{align*}
where the sum in $\beta$ is over those indices $\beta \in \IRR \cup \ICC$ which share a rook with $\alpha$;  
\begin{align*}
b_1^L := \sum_{\alpha \in \IRC\cup \ICR} \sum_{\beta} p_\alpha p_\beta, & \qquad b_2^L  := \sum_{\alpha \in \IRC \cup \ICR} \sum_{\alpha \neq \beta} p_{\alpha \beta}, \end{align*}
where the sum in $\beta$ is over those indices $\beta \in \IRC \cup \ICR$ which share a rook with $\alpha$; 
\begin{align*} 
b_1^{AL} := 2\, \sum_{\alpha \in \IRR \cup \ICC}  \sum_{\beta} p_\alpha p_\beta,  &\qquad  b_2^{AL} := 2\, \sum_{\alpha \in \IRR \cup \ICC} \sum_{\alpha \neq \beta} p_{\alpha \beta}, \end{align*}
where the sum in $\beta$ is over those indices $\beta \in \IRC \cup \ICR$ which share a rook with $\alpha$;
\begin{align*} 
b_1^{SS} := \sum_{\alpha}  \sum_{\beta} p_\alpha p_\beta,  &\qquad  b_2^{SS} := \sum_{\alpha} \sum_{\alpha \neq \beta} p_{\alpha \beta}, \end{align*}
where the sum is over those $\alpha, \beta \in I$ in which $\alpha, \beta$ share a rook, and at least one of $\alpha, \beta \in \ISS$;
and finally
\begin{align*} 
b_1 & := b_1^A + b_1^L + b_1^{AL} + b_1^{SS}, \\
b_2 & := b_2^A + b_2^L + b_2^{AL} + b_2^{SS}, \\
d & := 4(b_1 + b_2). 
\end{align*}

\begin{lemma}\label{b lemma}
Let $N := \binom{n}{2}$ and $r := n-k$.  We have

\begin{align}
\label{eq:lambda} p_\alpha = 2 \binom{n}{3} N^{-2}, 
 & \qquad \mbox{for all } \alpha \in \IRR \cup \ICC,   \\
\label{eq:lambda2} p_\alpha = \binom{n}{3}\, N^{-2}, & \qquad \mbox{for all } \alpha \in \IRC \cup \ICR, \\
\label{eq:ss} p_\alpha = \binom{n}{2} N^{-2}, & \qquad \mbox{for all } \alpha \in \ISS, 
\end{align}
\begin{align}
\label{b1A} b_1^{A}  =  &\, \binom{r}{2}\, (2r-3) \cdot 2\cdot \left[2 \binom{n}{3} N^{-2}\right]^2\, ,\\ 
\label{b1L} b_1^{L}  =  &\, \binom{r}{2}\, (2r-3)\, \cdot 4 \cdot \left[\binom{n}{3} N^{-2}\right]^2\, ,\\ 
\label{b1AL} b_1^{AL}  = &\, \binom{r}{2}\, (2r-3)\, \cdot  4 \cdot \left[2\binom{n}{3}N^{-2}\right] \left[\binom{n}{3} N^{-2}\right] ,\\ 
\label{b1SS} b_1^{SS}  = &\, \binom{r}{2}\, (2r-3)\, \left( 6 \cdot \left[\binom{n}{3}N^{-2} \cdot \binom{n}{2} N^{-2}\right] + \left[\binom{n}{2} N^{-2}\right]^2 \right),\\ 
\label{b1}   b_1  = &\, \binom{r}{2}(2r-3) \cdot \left( \left[20\cdot \binom{n}{3}^2 N^{-4}\right] + 6 \cdot \left[\binom{n}{3}N^{-2} \cdot \binom{n}{2} N^{-2}\right] + \left[\binom{n}{2} N^{-2}\right]^2\right)\, ,
\end{align} 
\begin{align}
\label{b2A}  b_2^A = \binom{r}{3} N^{-3}& \cdot 12\cdot \left[6\cdot \binom{n}{4} + \binom{n}{3}(5n-11) / 4 \right]\, ,\\
\label{b2L}  b_2^L  = \binom{r}{3} N^{-3}& \cdot 20 \binom{n}{4}\, ,\\
\label{b2AL}  b_2^{AL} = \binom{r}{3} N^{-3} & \cdot 24 \binom{n}{3} \frac{5n-11}{4}\, , \\
\label{b2SS} b_2^{SS} = \binom{r}{3} N^{-3} & \cdot 6 \left[6\binom{n}{3} + \binom{n}{2}\right],
\end{align}
\begin{align}
\label{b2} b_2  = \binom{r}{3} N^{-3}&\left[92\binom{n}{4}+36\,\binom{n}{3}\frac{5n-7}{4} + 6 \binom{n}{2}\right]\, .
\end{align}
\end{lemma}
\begin{proof}
We make repeated use of the following identity: for all positive integers $m, n$, we have
\[ \sum_{j=0}^n \binom{j}{m} = \binom{n+1}{m+1}; \]
see, e.g., \cite[Chapter 2, Section 12]{feller1}.

\begin{figure}[h]
\begin{subfigure}[t]{0.25\textwidth}
\ytableausetup{centertableaux, boxsize=1.15em}
\begin{ytableau}
\none[{\color{gray} 7}] & \none  &\none  &\none  &\none &\none &\none &\none  \\ 
\none[{\color{gray} 6}] & \none  &\none  &\none  &\none &\none &\none &\  \\ 
\none[{\color{gray} 5}] & \none  &\none  &\none  &\none &\none &\ &\ \\ 
\none[{\color{gray} 4}] & \none  &\none  &\none  &\none &\  &\  &\ \\ 
\none[{\color{gray} 3}] & \none  &\none  &\none  &\ &\  &\ &\ \\ 
\none[{\color{gray} 2}] & \none  &\none  &\bullet & a  &\bullet &b &\bullet \\ 
\none[{\color{gray} 1}] & \none  &\  &\ &\  &\ &\ &\ \\
\none & \none[{\color{gray} 1}] & \none[{\color{gray} 2}] & \none[{\color{gray} 3}] & \none[{\color{gray} 4}] & \none[{\color{gray} 5}] & \none[{\color{gray} 6}] & \none[{\color{gray} 7}]
\end{ytableau}
\caption{$\IRR \cup \IRR$}\label{block3_RR_RR}
\end{subfigure}~\qquad
\begin{subfigure}[t]{0.25\textwidth}
\begin{ytableau}
\none[{\color{gray} 7}] & \none  &\none  &\none  &\none &\none &\none &\none  \\ 
\none[{\color{gray} 6}] & \none  &\none  &\none  &\none &\none &\none &\   \\ 
\none[{\color{gray} 5}] & \none  &\none  &\none  &\none &\none &\bullet &\  \\ 
\none[{\color{gray} 4}] & \none  &\none  &\none  &\none &\  &\bullet &\  \\ 
\none[{\color{gray} 3}] & \none  &\none  &\none  &\bullet &\  &\bullet &\  \\ 
\none[{\color{gray} 2}] & \none  &\none  &\ & a  &\ &b &\  \\ 
\none[{\color{gray} 1}] & \none  &\  &\ &\bullet  &\ & \bullet &\  \\
\none & \none[{\color{gray} 1}] & \none[{\color{gray} 2}] & \none[{\color{gray} 3}] & \none[{\color{gray} 4}] & \none[{\color{gray} 5}] & \none[{\color{gray} 6}] & \none[{\color{gray} 7}]
\end{ytableau}
\caption{$\IRR \cup \ICC$}\label{block3_RR_CC}
\end{subfigure}~\qquad
\begin{subfigure}[t]{0.25\textwidth}
\begin{ytableau}
\none[{\color{gray} 7}] & \none  &\none  &\none  &\none &\none &\none &\none  \\ 
\none[{\color{gray} 6}] & \none  &\none  &\none  &\none &\none &\none &\bullet  \\ 
\none[{\color{gray} 5}] & \none  &\none  &\none  &\none &\none &\ &\ \\ 
\none[{\color{gray} 4}] & \none  &\none  &\none  &\none &\bullet  &\bullet  &\bullet \\ 
\none[{\color{gray} 3}] & \none  &\none  &\none  &\ &\  &\ &\ \\ 
\none[{\color{gray} 2}] & \none  &\none  &\ & a  &\ &b &\ \\ 
\none[{\color{gray} 1}] & \none  &\  &\ &\  &\ &\ &\ \\
\none & \none[{\color{gray} 1}] & \none[{\color{gray} 2}] & \none[{\color{gray} 3}] & \none[{\color{gray} 4}] & \none[{\color{gray} 5}] & \none[{\color{gray} 6}] & \none[{\color{gray} 7}]
\end{ytableau}
\caption{$\IRR \cup \IRC$}\label{block3_RR_RC}
\end{subfigure}
\\ 
\begin{subfigure}[t]{0.25\textwidth}
\begin{ytableau}
\none[{\color{gray} 7}] & \none  &\none  &\none  &\none &\none &\none &\none  \\ 
\none[{\color{gray} 6}] & \none  &\none  &\none  &\none &\none &\none &\  \\ 
\none[{\color{gray} 5}] & \none  &\none  &\none  &\none &\none &\ &\ \\ 
\none[{\color{gray} 4}] & \none  &\none  &\none  &\none &\  &\  &\ \\ 
\none[{\color{gray} 3}] & \none  &\none  &\none  &\ &\  &\ &\ \\ 
\none[{\color{gray} 2}] & \none  &\none  &\ & a  &\ &b &\ \\ 
\none[{\color{gray} 1}] & \none  &\bullet  &\ &\  &\ &\ &\ \\
\none & \none[{\color{gray} 1}] & \none[{\color{gray} 2}] & \none[{\color{gray} 3}] & \none[{\color{gray} 4}] & \none[{\color{gray} 5}] & \none[{\color{gray} 6}] & \none[{\color{gray} 7}]
\end{ytableau}
\caption{$\IRR \cup \ICR$}\label{block3_RR_CR}
\end{subfigure} 
~\qquad
\begin{subfigure}[t]{0.25\textwidth}
\begin{ytableau}
\none[{\color{gray} 7}] & \none  &\none  &\none  &\none &\none &\none &\none  \\ 
\none[{\color{gray} 6}] & \none  &\none  &\none  &\none &\none &\none &\  \\ 
\none[{\color{gray} 5}] & \none  &\none  &\none  &\none &\none &\bullet &\bullet \\ 
\none[{\color{gray} 4}] & \none  &\none  &\none  &\none &b  &\bullet  &\bullet \\ 
\none[{\color{gray} 3}] & \none  &\none  &\none  &\ &\  &\ &\ \\ 
\none[{\color{gray} 2}] & \none  &\none  &\ & a  &\ &\ &\ \\ 
\none[{\color{gray} 1}] & \none  &\ &\ &\  &\ &\ &\ \\
\none & \none[{\color{gray} 1}] & \none[{\color{gray} 2}] & \none[{\color{gray} 3}] & \none[{\color{gray} 4}] & \none[{\color{gray} 5}] & \none[{\color{gray} 6}] & \none[{\color{gray} 7}]
\end{ytableau}
\caption{$\IRC \cup \IRC$}\label{block3_RC_RC}
\end{subfigure}
~\qquad
\begin{subfigure}[t]{0.25\textwidth}
\begin{ytableau}
\none[{\color{gray} 7}] & \none  &\none  &\none  &\none &\none &\none &\none  \\ 
\none[{\color{gray} 6}] & \none  &\none  &\none  &\none &\none &\none &\  \\ 
\none[{\color{gray} 5}] & \none  &\none  &\none  &\none &\none &\ &\ \\ 
\none[{\color{gray} 4}] & \none  &\none  &\none  &\none &b  &\  &\ \\ 
\none[{\color{gray} 3}] & \none  &\none  &\none  &\bullet &\  &\ &\ \\ 
\none[{\color{gray} 2}] & \none  &\none  &\ & a  &\ &\ &\ \\ 
\none[{\color{gray} 1}] & \none  &\bullet &\ &\bullet  &\ &\ &\ \\
\none & \none[{\color{gray} 1}] & \none[{\color{gray} 2}] & \none[{\color{gray} 3}] & \none[{\color{gray} 4}] & \none[{\color{gray} 5}] & \none[{\color{gray} 6}] & \none[{\color{gray} 7}]
\end{ytableau}
\caption{$\IRC \cup \ICR$}\label{block3_RC_CR}
\end{subfigure}
\caption{Cases for combinations of two indices in $I$, holding rooks $a<b$ fixed, and each $\bullet$ denotes a possible placement of a third rook which satisfies the set in each case, where $b < \bullet$.  
}
\label{b cases}
\end{figure}

Let $p_1$ denote the probability that two given rooks $a$ and $b$ lie in the same row (but not the same square).
We have
\begin{equation}\label{parr} p_1 = N^{-2}\, \sum_{\ell=2}^{n-1} \ell\,(\ell-1) = 2 \binom{n}{3} N^{-2} \,. \end{equation}
By symmetry, $p_1$ is also the probability that two given rooks $a$ and $b$ lie in the same column.
This establishes Equation~\eqref{eq:lambda}. 

Let $p_2$ denote the probability that two given rooks $a$ and $b$, with $a < b$, are such that the column number of rook $a$ is the row number of rook $b$. 
We have
\begin{equation}\label{parc} p_2 =  N^{-2}\, \sum_{\ell=1}^{n-1} \sum_{c=\ell+1}^{n} (n-c) = N^{-2}\, \binom{n}{3}\, . \end{equation}
By symmetry, $p_2$ is also the probability that two given rooks $a$ and $b$, with $a<b$, are such that the row number of rook $a$ is the column number of rook $b$.
This establishes Equation~\eqref{eq:lambda2}. 

Let $p_3$ denote the probability that two given rooks $a$ and $b$, with $a < b$, are such that the two rooks lie in the same square.  We have
\begin{equation}\label{pass} p_3 =  \binom{n}{2}\, N^{-2}\, ,
\end{equation}
which establishes Equation~\eqref{eq:ss}.

Consider next the term $b_1^A$, which is the total contribution of terms involving two pairs of attacking rooks.
The outer sum for $b_1^A$ is over $2\binom{r}{2}$ pairs of rooks, one factor of $\binom{r}{2}$ for row-attacking and another factor of $\binom{r}{2}$ for column-attacking, and the inner sum is over all indices $\beta \in \IRR \cup \ICC$ which share \emph{at least} one rook with $\alpha$; that is, $2(r-2)+1=2r-3$ cases for overlapping rook(s).
We have
\[ b_1^{A} = \binom{r}{2} (2r-3) 4p_1^2, \]
where the factor of 4 comes from the cases $(\alpha, \beta) \in (\IRR, \IRR), (\ICC, \ICC), (\IRR, \ICC), (\ICC, \IRR)$, which establishes Equation~\eqref{b1A}.  
Similarly, we have
\[ b_1^{L} = \binom{r}{2} (2r-3) 4p_2^2, \]
where the factor of 4 comes from the cases $(\alpha, \beta) \in (\IRC, \IRC), (\IRC, \ICR), (\ICR, \IRC), (\ICR, \ICR)$, which establishes Equation~\eqref{b1L}.
Also, we have
\[ b_1^{AL} = \binom{r}{2}(2r-3) 2\left( p_1(2p_2) + p_2(2p_1) \right), \]
which covers the cases $(\alpha, \beta) \in (\IRR, \IRC), (\IRR, \ICR), (\ICC, \IRC), (\ICC, \ICR)$ and $(\beta, \alpha) \in (\IRR, \IRC), (\IRR, \ICR), (\ICC, \IRC), (\ICC, \ICR)$, which establishes Equation~\eqref{b1AL}.  
Finally,
\[ b_1^{SS} = \binom{r}{2}(2r-3) \left( 2p_1p_3 + 2p_2p_3 + p_3^2 \right), \]
which covers the cases $(\alpha, \beta) \in (\IRR, \ISS), (\ICC, \ISS), (\IRC, \ISS), (\ICR, \ISS), (\ISS, \ISS)$ and $(\beta, \alpha) \in (\IRR, \ISS), (\ICC, \ISS), (\IRC, \ISS), (\ICR, \ISS),$ noting that $(\ISS, \ISS)$ only appears once,
which establishes Equation~\eqref{b1SS}.
Adding these three expressions together gives the expression in Equation~\eqref{b1}.

The expressions involving $b_2$ are more complicated.
We consider first
\[ \small b_2^A = \sum_{\a \in \IRR} \left( \sum_{\substack{\a \ne \b \\ \b\in D_\a \cap \IRR}} p_{\a\b} + \sum_{\substack{\a \ne \b \\ \b\in D_\a \cap \ICC}} p_{\a\b}\right) + \sum_{\a \in \ICC} \left( \sum_{\substack{\a \ne \b \\ \b\in D_\a \cap \IRR}} p_{\a\b} + \sum_{\substack{\a \ne \b \\ \b\in D_\a \cap \ICC}} p_{\a\b}\right), \]
where $D_\a \subset I$ refers to the set of all indices $\b$ for which $\a$ and $\b$ share at least one rook. 
By symmetry, the two outer summations are the same, and so we consider only the first sum since this implies that also
\[ b_2^A = 2\sum_{\a \in \IRR} \left( \sum_{\substack{\a \ne \b \\ \b\in D_\a \cap \IRR}} p_{\a\b} + \sum_{\substack{\a \ne \b \\ \b\in D_\a \cap \ICC}} p_{\a\b}\right). \]

We first consider $\alpha, \beta\in I_{RR}$, where $\beta$ shares exactly one rook with $\alpha$, as depicted in Figure~\ref{block3_RR_RR}. 
We have 
\begin{equation} \label{compute:RR_RR}
p_{\alpha\beta} =  N^{-3}\cdot \sum_{\ell=1}^{n-3} 6\, \binom{n-\ell}{3} = N^{-3}\cdot 6\cdot \sum_{\ell=3}^{n-1} \binom{\ell}{3} = N^{-3}\cdot 6\cdot \binom{n}{4}.
\end{equation}
The summation is now over all $\binom{r}{2}\cdot (2(r-2)) = (r)_3$ ways of selecting an unorderd pair of distinct rooks, followed by selecting another unordered pair of distinct rooks which share exactly one rook.

Next, the probability that two rooks share a row and a third rook shares one of their columns, i.e., $\alpha \in \IRR$ and $\beta \in \ICC$, or vice versa, as depicted in Figure~\ref{block3_RR_CC}, is given by 
\[ p_{\alpha\beta} = 2\cdot N^{-3} \sum_{\ell=1}^{n-2} \sum_{c_1=\ell+1}^{n-1}\sum_{c_2=c_1+1}^n\left[(c_1-2) + (c_2-2)\right] = 2\cdot N^{-3}\left[\binom{n}{3}\frac{(5n-11)}{4}\right]. \]
The outer factor of 2 is due to swapping the locations of the two rooks which share the same row.
The summation is now over all $\binom{r}{2} (r-2) = (r)_3/2$ ways of selecting an unordered pair of distinct rooks, followed by selecting another distinct rook which will share either the same column as the first rook (via the factor $c_1-2$) or the second rook (via the factor $c_2-2$).

Summing over these two cases, we have
\begin{equation}\label{compute:b2A}
 b_2^A = (r)_3 \cdot N^{-3} \cdot 2\cdot \left[6\cdot \binom{n}{4} + \binom{n}{3}\frac{5n-11}{4} \right],
 \end{equation}
where the outer factor of 2 is from exchanging the roles of $\RR$ and $\CC$.  This establishes Equation~\eqref{b2A}. 

Next, for alignments, there are two main cases, as depicted in Figure~\ref{block3_RC_RC} and Figure~\ref{block3_RC_CR}, which rely on the assumption that $a < b < \bullet$.
We start with $\alpha \in \IRC$ and $\beta \in \IRC$, which corresponds to Figure~\ref{block3_RC_RC}. 
Let us consider first a 2-fold (i.e., double) alignment; that is, three rooks, say $a<b<c$, where the row number of $a$ is equal to the column number of $b$, and the row number of $b$ is equal to the column number of $c$. 
In this case, none of the rooks are attacking, and all lie on distinct squares. 
We have 
\begin{equation*}
p_{\alpha\beta} = N^{-3}\sum_{\ell=1}^{n-3}\sum_{c_1=\ell+1}^{n-2} \sum_{c_2=c_1+1}^{n-1} (n-c_2) = N^{-3} \binom{n}{4}\ . \end{equation*}
The number of such unordered triplets of rooks which satisfy $1 \leq a<b<c \leq r$ is $\binom{r}{3}$.

The next case is when there are two alignments (but \emph{not} a double alignment) due to rooks $b$ and $c$ being in the same row, both aligned with rook $a$, with $a < b$ and $a < c$.
We have
\[ p_{\alpha\beta} = N^{-3} \sum_{\ell=1}^{n-3} \sum_{c_1=\ell+1}^{n-2} (n-c_1)\, (n-c_1-1) = N^{-3}\, 2\, \binom{n}{4}. 
\]
The number of such triplets of distinct rooks which satisfy $a < b$ and $a < c$ is given by
\[ \sum_{r_1 = 1}^{r-1} \sum_{r_2=r_1+1}^{r} (r-r_1-1) = 2\, \binom{r}{3}. \]
Combining these we have 
\begin{equation*}
b_{\RC,\,\RC} := \sum_{\alpha \in \IRC} \sum_{\substack{\a \ne \b \\ \beta \in D_\alpha, \beta \in \IRC}} p_{\alpha\beta} = \binom{n}{4} \binom{r}{3} N^{-3} + 2\binom{n}{4}2\,\binom{r}{3} N^{-3} = 5\binom{n}{4}\binom{r}{3} N^{-3}. 
\end{equation*}

We next consider $\alpha \in \IRC$ and $\beta \in \ICR$, which corresponds to Figure~\ref{block3_RC_CR}.  A similar calculation (or argument by symmetry) yields 
\begin{equation*}
b_{\RC,\,\CR} := \sum_{\alpha \in \IRC} \sum_{\beta \in D_\alpha, \beta \in \ICR} p_{\alpha\beta} = 5\, \binom{n}{4}\binom{r}{3} N^{-3}. 
\end{equation*}
All remaining cases for the summands in $b_2^L$ follow by symmetry, as substituting $\RC$ with $\CR$ and vice versa in both summands does not change the value.
It follows that
\begin{equation}\label{compute:b2L}
 b_2^L  = \sum_{\alpha \in \IRC \cup \ICR} \sum_{\alpha\ne \beta} p_{\alpha\beta} = 2\, b_{\RC,\, \RC} + 2\, b_{\RC, \CR} = 20 \binom{r}{3} \binom{n}{4} N^{-3}. 
\end{equation}

The cases for both an attack and alignment are depicted in Figure~\ref{block3_RR_RC} and Figure~\ref{block3_RR_CR}.
For $\alpha \in \IRR, \beta \in \IRC$, we consider the case where $a < b < c$ and $\beta = \{a, c\} \in \IRC$, then we have
\[ p_{\alpha\beta} = \sum_{\ell=1}^{n-2} \sum_{c_1=\ell+1}^{n} (n-\ell-1) (n-c_1) = \binom{n}{3} \frac{3n-5}{4}, \]
with the summations extending over $\binom{r}{3}$ terms.

Continuing with $\alpha \in \IRR, \beta \in \IRC$, in the case $\beta = \{b, c\} \in \IRC$, then we have the same value of $p_{\alpha\beta}$ but with a summation extending over $2\binom{r}{3}$ terms.
In the case where $\beta = \{a,c\}$ and $c < a$, this is the same as having two alignments that is not a double alignment, and so $p_{\alpha\beta} = 2\binom{n}{4}$, with $\binom{r}{3}$ summands.
In the case where $\beta = \{b, c\}$ and $c < b$, $p_{\alpha\beta} = 2\binom{n}{4}$ similarly, with $2\binom{r}{3}$ summands. 

Summing over these cases, we have 
\begin{align}
\nonumber b_2^{AL} & =  2 \sum_{\alpha \in \IRR \cup \ICC} \sum_{\substack{\a\ne\b \\ \beta \in D_\alpha \cap (\IRC \cup \ICR)}}  p_{\alpha\beta} = 4 \sum_{\alpha \in \IRR \cup \ICC} \sum_{\substack{\b\ne\a \\ \beta \in D_\alpha \cap \IRC}}p_{\alpha\beta} \\ 
\nonumber	       & = 8 \sum_{\alpha \in \IRR} \sum_{\substack{\a\ne\b \\ \beta \in D_\alpha \cap \IRC}}  p_{\alpha\beta}  = 8 \times \left[3 \binom{r}{3}\, \binom{n}{3} \frac{3n-5}{4}\, N^{-3} + 3\binom{r}{3} 2\binom{n}{4} N^{-3}\right] \\
\label{compute:b2AL} 	       & = 24\binom{r}{3} \binom{n}{3} \frac{5n-11}{4}\, N^{-3}. 
\end{align}

Finally, the summands in $b_2^{SS}$ are easily seen to be equivalent to the previous cases involving one pair of rooks, as in
\begin{align}
\nonumber b_2^{SS} & = 2\,(r-2) \left[ \sum_{\alpha \in \IRR} p_\alpha / N + \sum_{\alpha \in \ICC} p_\alpha / N + \sum_{\alpha \in \IRC} p_\alpha / N + \sum_{\alpha \in \ICR} p_\alpha / N + \sum_{\alpha\in\ISS} N / N^{3}\right] \\
\nonumber  & = (r)_3 N^{-3} \left[2\binom{n}{3} + 2\binom{n}{3} + \binom{n}{3} + \binom{n}{3} + \binom{n}{2}\right] \\
\label{compute:b2SS}   & = (r)_3 N^{-3} \left[6\binom{n}{3} + \binom{n}{2}\right].
\end{align}

Adding equations~\eqref{compute:b2A},~\eqref{compute:b2L},~\eqref{compute:b2AL},~and~\eqref{compute:b2SS}, we obtain the expression for $b_2,$ which establishes Equation~\eqref{b2}. 
\end{proof}

Thus, Lemma~\ref{theorem:main} follows from equations~\eqref{b1}~and~\eqref{b2} via $d = 4(b_1+b_2)$, which is an upper bound on the total variation distance by Theorem~\ref{moments:theorem}.

\subsection{Proof of Theorems~\ref{cycle:sizes:extension} and~\ref{block:sizes:extension}}
To prove our main theorems we must condition on the event that there are no pairs of attacking rooks. 

We start with the joint distribution governing random set partitions. 
Define 
\[ W_2 := W_{\RC} + W_{\CR}, \]
which is the sum of indicators for pairwise occurrence of an alignment; and define
\[ R_2 := \sum_{\alpha \in \ICR\cup \ICR} \sum_{\alpha \neq \beta} X_\alpha X_\beta, \]
where the sum in~$\beta$ is over those indices $\beta \in \IRC \cup \ICR$ which share at least one rook with $\alpha$. 
Since $R_2$ is defined as the sum of indicators, the event $\{R_2 = 0\}$ also implies that there are no triple alignments, etc.
We therefore have
\begin{align*}
 d_{TV}( & \bfD,  (n-2r+W_2, r-2 W_2, W_2, 0, \ldots)\, |\,  \WC + \WR +\WS = 0) \\
 & = d_{TV}( \L(R_2\, |\, \WR+ \WC + \WS = 0), 0 ), 
\end{align*}
where the right hand side is simply $\p(R_2>0 | \WR+ \WC +\WS = 0)$.  
We have
\[ \p(R_2>0 | \WR+ \WC +\WS = 0) = \frac{\p(R_2>0, \WR + \WC +\WS = 0)}{\p(\WR + \WC +\WS = 0)} \leq \frac{\e R_2}{e^{-\lambda_R -\lambda_C-\lambda_{SS}} - d}, \]
where $d$ is given in Equation~\eqref{eq:b}, 
\[ \lambda_R = \lambda_C = \binom{r}{2} p_\alpha = \frac{2\binom{r}{2} \binom{n}{3} }{ \binom{n}{2}^2} , \]
where $p_\alpha$ is given in Equation~\eqref{eq:lambda}, and $\lambda_{SS} = \binom{r}{2}/\binom{n}{2}$, as long as $e^{-\lambda_R -\lambda_C - \lambda_{SS}} - d > 0$.  
By combining these expressions with Equation~\eqref{b2L} for $\e R_2$, we have 
\[ \p(R_2>0 | \WR+ \WC +\WS = 0) \leq \frac{20 \binom{r}{3}\binom{n}{4} / \binom{n}{2}^3}{\exp\left( -\lambda_R-\lambda_C-\lambda_{SS}\right) - d}, \]
with
\[ \frac{20 \binom{r}{3}\binom{n}{4} / \binom{n}{2}^3}{\exp\left( -\lambda_R-\lambda_C-\lambda_{SS}\right) - d} = O\left(\exp\left(\frac{4}{3}\, \frac{r^2}{n}\right) \frac{r^3}{n^2}\right) \]
whenever $r  = O(\sqrt{n})$. 
This completes the proof of Theorem~\ref{block:sizes:extension}.

For random permutations, we define similarly
\[ W_1 := W_{\RC} + W_{\CR} + W_{\RR} \]
and 
\[ R_1 := \sum_{\alpha\in \IRR} \sum_{\alpha \neq \beta} X_\alpha X_\beta + \sum_{\alpha\in\IRC\cup\ICR} \sum_{\alpha \neq \beta} X_\alpha X_\beta, \]
where in the first term the sum in~$\beta$ is over those indices $\beta \in \IRR \cup \IRC \cup \ICR$ which share at least one rook with $\alpha$, and in the second term the sum in~$\beta$ is over those indices $\beta \in \IRC\cup \ICR$ which share at least one rook with $\alpha$; i.e., $R_1$ is the sum of all indicator random variables of the event that three rooks form a double alignment in the corresponding bijection for permutations. 
We have similarly,
\[ d_{TV}( \bfC, (n-2r+W_1, r-2W_1, W_1, 0, \ldots)\,|\, \WC + \WS = 0) = d_{TV}( \L(R_1\, |\, \WC+\WS = 0), 0 ),  \]
where 
\[ \p(R_1>0\, |\, \WC + \WS = 0) \leq \frac{d/2}{\exp\left( -\lambda_C-\lambda_{SS}\right) - d}, \]
and where similarly as before, the inequality holds as long as $e^{-\lambda_C-\lambda_{SS}} - d > 0$, with
\[ \frac{d/2}{\exp\left( -\lambda_C-\lambda_{SS}\right) - d} = O\left(\exp\left(\frac{2}{3}\frac{r^2}{n}\right) \frac{r^3}{n^2}\right) \]
whenever $r = O(\sqrt{n})$. 

This completes the proof of Theorem~\ref{cycle:sizes:extension}. 

\section{Applications}
\label{applications}

The total variation distance bounds in Lemma~\ref{theorem:main} can also be used to obtain inequalities for the Stirling numbers. 
See also~\cite{Stirling} for similar bounds, and~\cite{PIPARCS} for similar bounds in a more general setting.
We have 
\[ \p(\WR + \WC + \WS = 0) = \frac{(n-k)!\, S(n,k)}{\binom{n}{2}^{n-k}}, \]
\[ \p(\WC + \WS = 0) = \frac{(n-k)!\, |s(n,k)|}{\binom{n}{2}^{n-k}}. \]
From Lemma~\ref{theorem:main} it follows that for $n-k = O(\sqrt{n})$, $\WC$, $\WR$, and $\WS$ are approximately Poisson distributed, with quantitative bounds provided for all finite values of parameters. 
We prove in Theorem~\ref{Stirling:one:approx} completely effective bounds on Stirling numbers using a sharper inequality especially optimized for the point probability at $0$, presented in Theorem~\ref{sharper} below.
\begin{theorem}[{\cite{ArratiaGoldstein}}]\label{sharper}
Under the assumptions and notation of Theorem~\ref{moments:theorem}, with $W := \sum_{\alpha \in I} X_\alpha$ and $\lambda := \e W$, we have 
\[ \left|\p(W = 0) - e^{-\lambda}\right| \leq \min(1, \lambda^{-1})(b_1 + b_2). \]
\end{theorem}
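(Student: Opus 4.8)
The plan is to run the Chen--Stein method directly for the one-point target set $A=\{0\}$, rather than extracting the point estimate from the total-variation bound of Theorem~\ref{moments:theorem}. Restricting attention to a single set $A$ is exactly what lets the crude constant $4$ be replaced by the sharp Stein factor $\min(1,\lambda^{-1})$; the combinatorial bookkeeping (the quantities $b_1,b_2$) is otherwise identical.

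First I would set up the Stein equation for $\mathrm{Poisson}(\lambda)$ and $A=\{0\}$: let $g\colon\{0,1,2,\dots\}\to\R$ be the solution (normalized by $g(0)=0$) of
\[
  \lambda\, g(j+1) - j\, g(j) \;=\; \mathbf{1}\{j=0\} - e^{-\lambda},\qquad j\ge 0 .
\]
Substituting $j=W$ and taking expectations gives the exact identity
\[
  \p(W=0) - e^{-\lambda} \;=\; \e\!\left[\lambda\, g(W+1) - W\, g(W)\right].
\]
The only quantitative fact about $g$ that I need is the first-difference bound
$\Delta g := \sup_{j\ge 0}\bigl|g(j+1)-g(j)\bigr| \le \lambda^{-1}\bigl(1-e^{-\lambda}\bigr) \le \min(1,\lambda^{-1})$, the classical ``magic factor'' for a singleton, which follows from the explicit summation formula for $g$ (see \cite{ArratiaGoldstein}). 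In particular, in contrast to the general Chen--Stein statement I will not need any bound on $\sup_j|g(j)|$, because the ``$b_3$'' term below vanishes under dissociation.

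Next I would expand $W g(W)=\sum_{\a\in I}X_\a\, g(W)$ and process each summand. For $\a\in I$ put $U_\a:=\sum_{\b\in D_\a\setminus\{\a\}}X_\b$ and $Z_\a:=\sum_{\b\notin D_\a}X_\b$, so that $W=X_\a+U_\a+Z_\a$; by dissociation $X_\a$ is independent of $Z_\a$, hence $\e[X_\a\, g(1+Z_\a)]=p_\a\,\e[g(1+Z_\a)]$. Using $\lambda=\sum_\a p_\a$ and $X_\a g(W)=X_\a g(1+U_\a+Z_\a)$, adding and subtracting $p_\a\,\e[g(1+Z_\a)]$ gives, for each $\a$,
\[
  p_\a\,\e[g(W+1)] - \e[X_\a\, g(W)]
  = p_\a\,\e\!\bigl[g(W+1)-g(1+Z_\a)\bigr]
  - \e\!\bigl[X_\a\bigl(g(1+U_\a+Z_\a)-g(1+Z_\a)\bigr)\bigr].
\]
Since $W-Z_\a=X_\a+U_\a\ge 0$ and $U_\a\ge 0$ are integers, bounding each difference of $g$ by $\Delta g$ times the gap yields
\[
  \bigl|p_\a\,\e[g(W+1)] - \e[X_\a g(W)]\bigr|
  \le \Delta g\Bigl(p_\a^2 + p_\a\!\!\sum_{\b\in D_\a\setminus\{\a\}}\!\! p_\b + \sum_{\b\in D_\a\setminus\{\a\}}\!\! p_{\a\b}\Bigr).
\]
Summing over $\a\in I$, recognizing $\sum_\a\sum_{\b\in D_\a}p_\a p_\b=b_1$ and $\sum_\a\sum_{\b\in D_\a\setminus\{\a\}}p_{\a\b}=b_2$, and invoking the Stein identity, one gets $\bigl|\p(W=0)-e^{-\lambda}\bigr|\le \Delta g\,(b_1+b_2)\le \min(1,\lambda^{-1})(b_1+b_2)$, which is the claim.

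I expect the only genuine obstacle to be the first step, namely establishing the sharp singleton Stein factor $\Delta g\le\min(1,\lambda^{-1})$; everything afterward is the same telescoping estimate already used for Theorem~\ref{moments:theorem}, with $b_1,b_2$ literally the same expressions. The one real simplification relative to that theorem is that the conditional-expectation term $\sum_\a\e\bigl|\e[X_\a-p_\a\mid\sigma(X_\b:\b\notin D_\a)]\bigr|$ is identically zero here, since dissociation gives $\e[X_\a\mid\sigma(X_\b:\b\notin D_\a)]=p_\a$; this is why no $b_3$ appears and why the final bound involves only differences of $g$ and not $g$ itself.
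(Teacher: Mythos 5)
The paper offers no proof of this statement --- it is quoted verbatim from \cite{ArratiaGoldstein} --- so there is nothing internal to compare against; your argument is a correct reconstruction of the standard Chen--Stein proof from that reference. The decomposition $W=X_\a+U_\a+Z_\a$ with $X_\a$ independent of $Z_\a$ by dissociation, the telescoping estimate $|g(a)-g(b)|\le\Delta g\,|a-b|$ for nonnegative integers, and the identification of the resulting sums with $b_1$ and $b_2$ (noting that $\a\in D_\a$, so the diagonal terms $p_\a^2$ are absorbed into $b_1$) are all sound, as is the observation that the $b_3$ term vanishes because $X_\a$ is independent of the whole collection $\{X_\b:\b\notin D_\a\}$. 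The one deferred ingredient, $\Delta g\le\lambda^{-1}(1-e^{-\lambda})\le\min(1,\lambda^{-1})$, is the classical Barbour--Eagleson bound: for $A=\{0\}$ the Stein solution is $g(j+1)=\frac{j!}{\lambda^{j+1}}\,\p(Z>j)$ with $Z\sim\mathrm{Poisson}(\lambda)$, which is positive and decreasing in $j$ with $g(1)=\lambda^{-1}(1-e^{-\lambda})$, so every first difference is at most $g(1)$; in fact this bound on $\Delta g$ holds for the Stein solution of an arbitrary set $A$, and the real gain from working with the single random variable $W$ and the single set $\{0\}$ is only the avoidance of the cruder constant $4$ appearing in the process-level Theorem~\ref{moments:theorem}.
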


Note that for this application of Poisson approximation, we use the smaller index set $\IRR \cup \ICC \cup \ISS$ for Stirling numbers of the second kind, and smaller still index set $\ICC \cup \ISS$ for Stirling numbers of the first kind.

\begin{theorem}\label{Stirling:one:approx}
Let $N = \binom{n}{2},$
\[\lambda_1(n,r) := \frac{2\binom{r}{2} \binom{n}{3} }{ \binom{n}{2}^2} + \frac{\binom{r}{2}}{\binom{n}{2}}, \qquad \lambda_2(n,r) := \frac{4\binom{r}{2} \binom{n}{3} }{ \binom{n}{2}^2} + \frac{\binom{r}{2}}{\binom{n}{2}}, \qquad \] 
\[f_1(n,r) :=  \binom{r}{2}(2r-3)\, N^{-4}\, \left(4 \binom{n}{3}^2 + 4 \binom{n}{3}\binom{n}{2} + \binom{n}{2}^2\right), \]
\[ f_2(n,r) :=  (r)_3\, N^{-3} \left(6\binom{n}{4}  + 4 \binom{n}{3} + \binom{n}{2}\right), \]
\[ h_1(n,r) := \binom{r}{2}(2r-3)N^{-4}\left(16 \binom{n}{3}^2 + 8 \binom{n}{3}\binom{n}{2} + \binom{n}{2}^2\right)\]
\[ h_2(n,r) := (r)_3 N^{-3} \left[12\binom{n}{4} + \binom{n}{3}\frac{5n-3}{2} + \binom{n}{2}\right]\, ,\] 
Define $\lambda \equiv \lambda(n, n-k)$, $f_1 \equiv f_1(n,n-k)$, $f_2 \equiv f_2(n, n-k)$, $h_1 \equiv h_1(n, n-k)$, $h_2 \equiv h_2(n, n-k)$. 

Then for each $n \geq 2$ and $1 \leq k \leq n$, we have 
\[ \frac{\binom{n}{2}^{n-k}}{(n-k)!}e^{-\lambda_1}\left(1 - e^{\lambda_1}(f_1+f_2)\right) \leq |s(n,k)| \leq \frac{\binom{n}{2}^{n-k}}{(n-k)!}e^{-\lambda_1}\left(1 + e^{\lambda_1}(f_1+f_2)\right); \]
\[ \frac{\binom{n}{2}^{n-k}}{(n-k)!}e^{-\lambda_2}\left(1 - e^{\lambda_2}(h_1+h_2)\right) \leq S(n,k) \leq \frac{\binom{n}{2}^{n-k}}{(n-k)!}e^{-\lambda_2}\left(1 + e^{\lambda_2}(h_1+h_2)\right). \]
\end{theorem}
\begin{proof}
For Stirling numbers of the first kind, we use index set $\ICC \cup \ISS.$  Recalling the notation from the proof of Lemma~\ref{theorem:main}, we have 
\begin{align*}
 b_1 \, = & \sum_{\alpha \in \ICC \cup \ISS} \sum_{\beta \in D_\alpha \cap (\ICC \cup \ISS)} p_\alpha p_\beta \\
         = & \sum_{\alpha \in \ICC} \sum_{\beta \in D_\alpha \cap \ICC} p_\alpha p_\beta + \sum_{\alpha \in \ICC} \sum_{\beta \in D_\alpha \cap \ISS} p_\alpha p_\beta \\
        & + \sum_{\alpha \in \ISS} \sum_{\beta \in D_\alpha \cap \ICC} p_\alpha p_\beta + \sum_{\alpha \in \ISS} \sum_{\beta \in D_\alpha \cap \ISS} p_\alpha p_\beta \\
        & = \binom{r}{2}(2r-3)\left(p_1^2 + 2p_1p_3 + p_3^2\right) \\
        & = \binom{r}{2}(2r-3)\left(4 \binom{n}{3}^2 N^{-4} + 4 \binom{n}{3}\binom{n}{2} N^{-4} + \binom{n}{2}^2 N^{-4}\right).
\end{align*}
For $b_2$ we have similarly (see Equation~\eqref{compute:RR_RR} and Equation~\eqref{compute:b2SS})
\begin{align*}
b_2 \, = & \sum_{\alpha \in \ICC \cup \ISS} \sum_{\substack{\b\ne\a \\ \beta \in D_\alpha \cap (\ICC \cup \ISS)}}p_{\alpha\beta} \\
& = \sum_{\alpha \in \ICC} \sum_{\substack{\b \ne \a \\ \beta \in D_\alpha \cap \ICC}} p_{\alpha\beta} + 2\sum_{\alpha \in \ICC} \sum_{\substack{\b \ne \a \\ \beta \in D_\alpha \cap \ISS}} p_{\alpha\beta} + \sum_{\alpha \in \ISS} \sum_{\substack{\b \ne \a \\ \beta \in D_\alpha \cap \ISS}} p_{\alpha\beta} \\
& = (r)_3 \left(6\cdot \binom{n}{4}N^{-3}  + 2\cdot \left(2 \binom{n}{3} N^{-3}\right) + \binom{n}{2} N^{-3}\right).
\end{align*}

For Stirling numbers of the second kind, we use index set $\IRR \cup \ICC \cup \ISS$.  A similar calculation as the above yields
\[ b_1 = 4p_1^2 + 4 p_1 p_3 + p_3^2, \]
and
\begin{align*}
 b_2 & = b_2^A + 2\,(r-2) \left[ \sum_{\alpha \in \IRR} p_\alpha / N + \sum_{\alpha \in \ICC} p_\alpha / N + \sum_{\alpha\in\ISS} N / N^{3}\right] \\
 & = (r)_3 \cdot N^{-3} \cdot 2\cdot \left[6\cdot \binom{n}{4} + \binom{n}{3}\frac{5n-11}{4} \right] + (r)_3 N^{-3} \left[2\binom{n}{3} + 2\binom{n}{3} + \binom{n}{2}\right].
 \end{align*}
 Applying Theorem~\ref{sharper} and rearranging the terms completes the proof.
\end{proof}

Another approach for obtaining preasymptotic lower and upper bounds for the sum of dissociated indicator random variables is by using the Lov\'asz local lemma~\cite{erdos1975problems} for the lower bound (see also~\cite{peres2010two}), and Suen's inequality~\cite{suen1990correlation} for the upper bound (see also~\cite{janson2002concentration}).  See for example~\cite{crane2016probability, perarnau} for applications involving pattern-avoidance in random permutations.  
First we state the theorems in terms of dependency graphs and apply them below. 

\begin{theorem}[Lov\'asz local lemma \cite{peres2010two}]\label{LLL}
Let $\{E_i\}_{i=1}^m$ be a collection of events in some probability space, and let $\{x_i\}_{i=1}^m$ be a sequence of numbers in $(0,1)$.  
Let $H$ denote the dependency graph for $\{E_i\}_{i=1}^m$, which is a graph with vertex set $\{1,\ldots,m\}$ such that for disjoint subsets $A$ and $B$ of $\{1,\ldots,m\}$, no edges in $H$ implies that $\{E_i\}_{i \in A}$ and $\{E_i\}_{i \in B}$ are independent. 
Suppose for each $\ell=1,\ldots,m$ we have for real-valued $x_i \in (0,1)$ that 
\begin{equation}\label{lower:condition} 
\p\left(E_\ell\right) \leq x_\ell  \prod_{i\sim j}(1-x_i). 
\end{equation}
Then we have 
\begin{equation}\label{lower:bound} \p\left(\bigcap_{i = 1}^m A_i^c\right) \geq \prod_{i = 1}^m (1-x_i). \end{equation}
\end{theorem}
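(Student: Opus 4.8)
The statement to be established is Theorem~\ref{LLL}, the asymmetric Lov\'asz local lemma in Spencer's form; the quantity $\prod_{i\sim j}(1-x_i)$ in \eqref{lower:condition} is to be read as the product of $(1-x_j)$ over all neighbours $j$ of $\ell$ in the dependency graph $H$, and $A_i^c$ in \eqref{lower:bound} means $E_i^c$. The plan is the classical induction-on-the-conditioning-set argument. Concretely, I would first prove the \emph{key lemma}: for every index $\ell$ and every $S\subseteq\{1,\dots,m\}$ with $\ell\notin S$,
\[ \p\Big(E_\ell\,\Big|\,\bigcap_{j\in S}E_j^c\Big)\le x_\ell, \]
and then derive \eqref{lower:bound} by the chain rule. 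As a byproduct the induction also yields $\p(\bigcap_{j\in S}E_j^c)\ge\prod_{j\in S}(1-x_j)>0$ for every $S$, so all the conditional probabilities below are well-defined; this handles the only measure-theoretic nuisance (the possibility that a conditioning event has probability zero).

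For the key lemma I would induct on $|S|$. The base case $S=\emptyset$ is immediate from \eqref{lower:condition}, since every factor $(1-x_j)\in(0,1)$ gives $\p(E_\ell)\le x_\ell$. For the inductive step, split $S=S_1\cup S_2$ with $S_1=\{j\in S:\ j\sim\ell\}$ (the neighbours of $\ell$ inside $S$) and $S_2=S\setminus S_1$, and write
\[ \p\Big(E_\ell\,\Big|\,\bigcap_{j\in S}E_j^c\Big)=\frac{\p\big(E_\ell\cap\bigcap_{j\in S_1}E_j^c\ \big|\ \bigcap_{j\in S_2}E_j^c\big)}{\p\big(\bigcap_{j\in S_1}E_j^c\ \big|\ \bigcap_{j\in S_2}E_j^c\big)}. \]
The numerator is at most $\p(E_\ell\mid\bigcap_{j\in S_2}E_j^c)$, which equals $\p(E_\ell)$ because no index of $S_2$ is adjacent to $\ell$, so by the defining property of the dependency graph $E_\ell$ is independent of $\{E_j\}_{j\in S_2}$; then \eqref{lower:condition} bounds this by $x_\ell\prod_{j\sim\ell}(1-x_j)$. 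For the denominator, enumerate $S_1=\{i_1,\dots,i_p\}$ and expand it telescopically as $\prod_{s=1}^{p}\p\big(E_{i_s}^c\ \big|\ \bigcap_{t<s}E_{i_t}^c\cap\bigcap_{j\in S_2}E_j^c\big)$; each conditioning set here has size at most $(p-1)+|S_2|=|S|-1<|S|$, so the inductive hypothesis applies and each factor is $\ge 1-x_{i_s}$, whence the denominator is $\ge\prod_{j\in S_1}(1-x_j)\ge\prod_{j\sim\ell}(1-x_j)$ (adding the remaining neighbour factors, all in $(0,1)$, only shrinks the product). Dividing the two bounds gives exactly $x_\ell$, completing the induction.

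With the key lemma established, \eqref{lower:bound} follows by the chain rule applied in any fixed order:
\[ \p\Big(\bigcap_{i=1}^m E_i^c\Big)=\prod_{i=1}^m\p\Big(E_i^c\ \Big|\ \bigcap_{j<i}E_j^c\Big)=\prod_{i=1}^m\Big(1-\p\big(E_i\ \big|\ \bigcap_{j<i}E_j^c\big)\Big)\ge\prod_{i=1}^m(1-x_i). \]

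I do not expect any deep obstacle here — this is a textbook result — so the real work is bookkeeping: choosing the split $S=S_1\cup S_2$ so that the independence used in the numerator is precisely the dependency-graph hypothesis ($E_\ell$ independent of $\{E_j\}_{j\in S_2}$, a family of non-neighbours), and checking that every conditioning set that appears in the telescoping expansion of the denominator is strictly smaller than $|S|$ so the induction is legitimate. The single point deserving a sentence of care is positivity of the conditioning events, which the same induction delivers via the lower bound $\p(\bigcap_{j\in S}E_j^c)\ge\prod_{j\in S}(1-x_j)$.
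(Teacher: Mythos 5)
The paper states this lemma as a quotation from the literature (\cite{peres2010two}) and gives no proof of its own, so there is no internal argument to compare against; your write-up is the standard inductive proof of the asymmetric Lov\'asz local lemma and it is correct. The key lemma $\p(E_\ell \mid \bigcap_{j\in S}E_j^c)\le x_\ell$, proved by induction on $|S|$ via the split of $S$ into neighbours and non-neighbours of $\ell$, the use of the dependency-graph hypothesis exactly where it is needed (independence of $E_\ell$ from the non-neighbour \emph{family}), the telescoping lower bound on the denominator, and the final chain-rule step are all in order, and you correctly flag the positivity of the conditioning events as the one point requiring care. You also rightly read the statement's $\prod_{i\sim j}(1-x_i)$ as the product of $(1-x_j)$ over neighbours $j$ of $\ell$, and $A_i^c$ as $E_i^c$; both are typographical slips in the quoted statement.
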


\begin{theorem}[Suen's inequality~{\cite[Theorem~2]{janson2002concentration}}]\label{Suen_inequality}
Let $\{I_i\}_{i \in\mathcal{I}}$ denote a finite family of indicator random variables defined on a common probability space. 
Let $H$ denote the dependency graph for $\{I_i\}_{i\in \mathcal{I}}$, which is a graph with vertex set $\mathcal{I}$ such that for disjoint subsets $A$ and $B$ of $\mathcal{I}$, no edges in $H$ implies that $\{I_i\}_{i \in A}$ and $\{I_i\}_{i \in B}$ are independent. 
Define random variable $N := \sum_{i=1}^{m} I_i$, and let $\mu_i := \e\, I_i$, $i=1,\ldots, m$. 
Finally, define 
\begin{align*}
 \Delta = \sum_{\{i,j\}:i \sim j} \e I_i I_j, &\qquad \qquad  \delta = \max_i \sum_{j \sim i} \e I_j.
\end{align*}
Then 
\[ \p(N = 0) \leq \exp\left( -\sum_{i\in \mathcal{I}}\mu_i + \Delta  e^{2\delta}\right). \]
\end{theorem}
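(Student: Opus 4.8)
The plan is to bound $\p(N=0)=\e\prod_{i\in\mathcal{I}}(1-I_i)$ by a telescoping argument over an arbitrary fixed ordering $1,2,\ldots,m$ of $\mathcal{I}$, reducing the whole statement to a single correlation estimate per vertex against its earlier neighbors. First I would introduce, for $k=1,\ldots,m$, the event $B_k:=\{I_j=0\text{ for all }j<k\}$ and the conditional probability $\tilde\mu_k:=\p(I_k=1\mid B_k)$, giving the exact identity $\p(N=0)=\prod_{k=1}^m(1-\tilde\mu_k)$. Since $1-x\le e^{-x}$, this yields $\p(N=0)\le\exp\bigl(-\sum_k\tilde\mu_k\bigr)$, so the theorem follows once I establish the single inequality $\sum_{k=1}^m(\mu_k-\tilde\mu_k)\le\Delta\,e^{2\delta}$.

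The heart of the argument is a per-vertex correlation lemma. Fix $k$ and split the earlier indices into neighbors $J=\{j<k:j\sim k\}$ and non-neighbors $K=\{j<k:j\not\sim k\}$. By the dependency-graph hypothesis applied to the disjoint sets $\{k\}$ and $K$, the variable $I_k$ is independent of the whole family $\{I_l\}_{l\in K}$, so with $B_k':=\{I_l=0\text{ for all }l\in K\}$ one has $\p(I_k=1\mid B_k')=\mu_k$. Writing $B_k=B_k'\cap\{I_j=0,\ j\in J\}$ and applying a union bound over $J$ inside the conditional measure $\p(\,\cdot\mid B_k')$, I would obtain $\tilde\mu_k\ge\mu_k-\sum_{j\in J}\p(I_k=1,I_j=1\mid B_k')$, the division by $\p(I_j=0,\,j\in J\mid B_k')\le1$ only helping. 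Hence $\mu_k-\tilde\mu_k\le\sum_{j\sim k,\ j<k}\p(I_k=1,I_j=1\mid B_k')$. Summing over $k$ counts each unordered adjacent pair $\{i,j\}$ exactly once, which is precisely the combinatorial shape of $\Delta=\sum_{\{i,j\}:i\sim j}\e I_iI_j$.

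It remains to strip off the conditioning, i.e. to compare $\p(I_k=1,I_j=1\mid B_k')$ with the unconditional $\e I_kI_j$; this is where the factor $e^{2\delta}$ is produced and is the main obstacle. The crude bound $\p(I_k=1,I_j=1\mid B_k')\le\e I_kI_j/\p(B_k')$ is useless, since $\p(B_k')$ can be exponentially small. The correct mechanism is locality: setting $K_j:=K\setminus N(j)$, the pair $(I_k,I_j)$ is independent of $\{I_l\}_{l\in K_j}$ because no edge joins $\{k,j\}$ to $K_j$, so the far part of the conditioning cancels in the ratio and only the conditioning against the neighborhoods of $k$ and of $j$ survives. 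After a careful analysis this residual conditioning is controlled by the degree-weighted sums $\sum_{l\sim\,\cdot}\e I_l\le\delta$; since two neighborhoods are in play, one gets the multiplicative penalty $e^{2\delta}$, namely $\p(I_k=1,I_j=1\mid B_k')\le e^{2\delta}\,\e I_kI_j$. Substituting gives $\sum_k(\mu_k-\tilde\mu_k)\le e^{2\delta}\Delta$, completing the proof.

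I expect this last conditional-correlation bound to be the delicate point: with no monotonicity (FKG) hypothesis available, nothing a priori prevents conditioning on a neighbor being zero from inflating an individual conditional probability, and it is exactly this inflation that the $e^{2\delta}$ factor is designed to absorb. The cleanest fully rigorous route to that constant is Janson's induction on the number of edges of $H$: delete one edge at a time by coupling in an independent copy of one endpoint, bound the cost of each deletion by the local pairwise correlation, and accumulate the costs over the two incident neighborhoods. I would use the transparent telescoping and correlation lemma above to organize the proof, and invoke the edge-deletion induction to certify the $e^{2\delta}$ estimate in the one step that genuinely requires it.
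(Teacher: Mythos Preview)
The paper does not prove this theorem at all: Theorem~\ref{Suen_inequality} is quoted verbatim from \cite[Theorem~2]{janson2002concentration} and used as a black box in the proof of Theorem~\ref{LS2}. There is therefore no ``paper's own proof'' to compare your proposal against.

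As for your sketch on its own merits: the telescoping identity $\p(N=0)=\prod_k(1-\tilde\mu_k)$ and the reduction to bounding $\sum_k(\mu_k-\tilde\mu_k)$ are indeed the standard skeleton of Janson's proof. Your per-vertex correlation lemma is also correct in outline. However, you explicitly flag the crucial step --- the bound $\p(I_k=1,I_j=1\mid B_k')\le e^{2\delta}\,\e I_kI_j$ --- as something you would ``invoke'' rather than prove, and you appeal to an edge-deletion induction without carrying it out. That step is the entire content of the theorem; everything else is bookkeeping. So what you have written is a proof strategy, not a proof: the one inequality that requires a genuine idea is deferred to a reference. If you intend this as a self-contained argument, you need to actually execute the induction (or the alternative argument via iterated conditioning that Janson gives), showing concretely how each neighborhood contributes at most a factor $e^{\delta}$.
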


\begin{theorem}\label{LS2}
Let $N = \binom{n}{2}$.  Let $p_2 \equiv p_2(n) = \frac{4\binom{n}{3}}{N^2} + \frac{1}{N}$ be the probability that two rooks are attacking or lie on the same square, 
and let $p_1 \equiv p_1(n) = \frac{2\binom{n}{3}}{N^2} + \frac{1}{N}$ be the probability that two rooks lie in the same column, possibly the same square, as computed in Equation~\eqref{parr}.
We define
\[ \lambda_1 \equiv \lambda_1(n,r) := p_1(n) \binom{r}{2},\qquad \qquad  \lambda_2 \equiv \lambda_2(n,r) := p_2(n) \binom{r}{2}, \]
\[ g_1(n,r) := \frac{1}{2} \left(1 - p_1 - \sqrt{1 - \left(4\, (2r-2) - 2\right) p_1 + p_1^2}\right), \]
\[ g_2(n,r) := \frac{1}{2} \left(1 - p_2 - \sqrt{1 - \left(4\, (2r-2) - 2\right) p_2 + p_2^2}\right), \]
$\lambda_1 \equiv \lambda_1(n, n-k)$, $\lambda_2 \equiv \lambda_2(n, n-k)$, $g_1 \equiv g_1(n,n-k)$, $g_2 \equiv g_2(n, n-k)$, and recall $f_2$ and $h_2$ from Theorem~\ref{Stirling:one:approx}.

For all $n \geq 2$ and $1 \leq k \leq n$ such that $p_1\, e^{g_1} \in (0,1)$, we have 
\[ \frac{\binom{n}{2}^{n-k}}{(n-k)!} \left(1 - p_1\, e^{g_1}\right)^{\binom{n-k}{2}}  \leq |s(n,k)| \leq \frac{\binom{n}{2}^{n-k}}{(n-k)!}e^{-\lambda_1} \exp\left(f_2  \exp(2\, p_1\, (2(n-k)-3) \right). \]
Also, for all $k$ and $n$ such that $p_2\,  e^{g_2(n,n-k)} \in (0,1)$, we have 
\[ \frac{\binom{n}{2}^{n-k}}{(n-k)!} \left(1 - p_2\, e^{g_2}\right)^{\binom{n-k}{2}}  \leq S(n,k) \leq \frac{\binom{n}{2}^{n-k}}{(n-k)!}e^{-\lambda_2} \exp\left(h_2 \exp(2\, p_2\, (2(n-k)-3) \right). \]
\end{theorem}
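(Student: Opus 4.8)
The plan is to realize each Stirling number as the probability that $r=n-k$ independent uniform rooks on $B_n$ avoid certain coincidences, and then to sandwich that probability: the Lov\'asz local lemma (Theorem~\ref{LLL}) supplies the lower bound and Suen's inequality (Theorem~\ref{Suen_inequality}) supplies the upper bound. As recorded at the start of this section, $\p(\WC=0)=(n-k)!\,|s(n,k)|\,\binom{n}{2}^{-(n-k)}$ and $\p(\WR+\WC=0)=(n-k)!\,S(n,k)\,\binom{n}{2}^{-(n-k)}$, so it is enough to bound $\p(\WC=0)$ and $\p(\WR+\WC=0)$ on both sides. For the first kind the bad events are $E_{\{a,b\}}:=\{\text{rooks }a,b\text{ share a column}\}$, $1\le a<b\le r$, each of probability $p_1$ by~\eqref{parr}. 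For the second kind I would collapse, for each unordered pair, the $\RR$ and $\CC$ coincidences into the single bad event $\tilde E_{\{a,b\}}:=\{\text{rooks }a,b\text{ share a row or a column}\}$, of probability $p_2=p_2(n)=2p_1-\binom{n}{2}^{-1}$; then $\{\WR+\WC=0\}$ is exactly the event that none of the $\binom{r}{2}$ events $\tilde E_{\{a,b\}}$ occurs, which is why the exponent appearing in the bound is $\binom{n-k}{2}$ and not $2\binom{n-k}{2}$.

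For the lower bounds I would run Theorem~\ref{LLL} with all weights equal to a common value $x$. Two bad events are dependent precisely when the underlying rook pairs share a rook, so the dependency graph has maximum degree $2(r-2)\le m(r)-1$, and with $x_i\equiv x$ the hypothesis~\eqref{lower:condition} collapses to the single scalar inequality $p\le x(1-x)^{m(r)-1}$ (with $p=p_1$, resp.\ $p=p_2$). Writing $x=p\,e^{g}$ and invoking the elementary estimates $\log\tfrac{1}{1-y}\le\tfrac{y}{1-y}$ and $e^{g}\le\tfrac{1}{1-g}$, one checks that this inequality is implied by $g\,(1-p-g)\ge(m(r)-1)\,p$, and that the value $g:=g_1$ (resp.\ $g_2$) from the statement is exactly the smaller root of the associated quadratic $g^2-(1-p)g+(m(r)-1)p=0$. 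The admissibility hypothesis $p_1e^{g_1}\in(0,1)$ (resp.\ $p_2e^{g_2}\in(0,1)$) is what makes the substitution $x=p\,e^{g}\in(0,1)$ legitimate and validates the above estimates, while $0<g_i<1-p$ holds automatically at the smaller root. Then~\eqref{lower:bound} gives $\p(\WC=0)\ge(1-p_1e^{g_1})^{\binom{r}{2}}$ and $\p(\WR+\WC=0)\ge(1-p_2e^{g_2})^{\binom{r}{2}}$, which are the asserted lower bounds on $|s(n,k)|$ and $S(n,k)$.

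For the upper bounds I would apply Theorem~\ref{Suen_inequality} with $N:=\WC$ for the first kind and $N:=\sum_{\{a,b\}}\mathbf{1}_{\tilde E_{\{a,b\}}}$ for the second, so that in each case $\{N=0\}$ is the event being studied. Here $\sum_i\mu_i=\binom{r}{2}p_1=\lambda_1$ (resp.\ $\binom{r}{2}p_2=\lambda_2$), the shared-rook dependency graph gives $\delta=\max_i\sum_{j\sim i}\mu_j\le(2r-3)\,p$, and $\Delta=\sum_{\{i,j\}:\,i\sim j}\e I_iI_j$ is precisely a sum of the kind already estimated in Lemma~\ref{b lemma}: for the first kind two column coincidences sharing a rook force three rooks into one column, so $\Delta$ is bounded by $c_2^A$ of~\eqref{b2A}; for the second kind a product $\mathbf{1}_{\tilde E_{\{a,b\}}}\mathbf{1}_{\tilde E_{\{a,c\}}}$ expands into the $\RR$--$\RR$, $\RR$--$\CC$, $\CC$--$\RR$ and $\CC$--$\CC$ contributions whose sum is $b_2^A$, bounded by $c_4^A=c_2^A$. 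Substituting these into $\p(N=0)\le\exp\!\big(-\sum_i\mu_i+\Delta\,e^{2\delta}\big)$ produces $\p(\WC=0)\le e^{-\lambda_1}\exp\!\big(c_2^A\exp(2p(2(n-k)-3))\big)$ and $\p(\WR+\WC=0)\le e^{-\lambda_2}\exp\!\big(c_4^A\exp(2(2(n-k)-3)p_2)\big)$, hence the claimed upper bounds.

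The probability inputs --- the values of $p_1$, $p_2$ and all the $\Delta$-sums --- are already computed in~\eqref{parr} and in the proof of Lemma~\ref{b lemma}, so they are routine here. The step I expect to require the most care is the verification of the Lov\'asz local lemma hypothesis: showing that, after the two elementary estimates, the scalar condition really reduces to $g(1-p-g)\ge(m(r)-1)p$, that $g_1$ and $g_2$ are the correct roots, and that the stated conditions $p_ie^{g_i}\in(0,1)$ (together with nonnegativity of the discriminant) genuinely suffice; a secondary nuisance is keeping the bookkeeping straight for the slightly conservative degree bound $2r-3$ used in both Theorem~\ref{LLL} and Theorem~\ref{Suen_inequality}, which dominates the true maximum degree $2(r-2)$ of the shared-rook graph.
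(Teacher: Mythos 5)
Your proposal is correct and follows essentially the paper's own route: Suen's inequality on the shared-rook dependency graph for the upper bounds (with $\sum_i\mu_i=\lambda_j$, $\delta\le(2r-3)p$, and $\Delta$ bounded by the $b_2^A$-type sums of Lemma~\ref{b lemma}), and the Lov\'asz local lemma with equal weights $x=p\,e^{g}$ for the lower bounds, where your identification of $g_1,g_2$ as the smaller roots of $g^2-(1-p)g+(m-1)p=0$ correctly reconstructs the computation the paper outsources to \cite{crane2016probability} without reproducing it. The only cosmetic difference is that for $S(n,k)$ you collapse the RR and CC coincidences into a single per-pair attack event of probability $p_2$, whereas the paper's sketch indexes by $\IRR\cup\ICC$; your bookkeeping actually matches the stated exponent $\binom{n-k}{2}$ and the value $\lambda_2=\binom{r}{2}p_2$ more directly.
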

\begin{proof}

First we prove the upper bounds using Suen's inequality. 
Recall from Section~\ref{rook_process} the random variables $X_\alpha$, for $\alpha \in I$, where $X_\alpha$ is the indicator of some event, which we now denote by $E_\alpha$. 
Using the notation from Section~\ref{proof:of:bounds}, note that event $E_\alpha$ depends on whether $\alpha$ is in $\IRR$, $\ICC$, $\ISS$.  
We write $E_\alpha^c$ to denote the complementary event. 
Define $\mathcal{E}_1 := \{E_\alpha\}_{\alpha \in \ICC \cup \ISS}$, and $\mathcal{E}_2 := \{E_\alpha\}_{\alpha\in\IRR \cup \ICC \cup \ISS}$. 
We next define the dependency graph $H_1$ of $\mathcal{E}_1$ to be the graph with vertex set $V(H_1) = \ICC \cup \ISS$, and edge set $E(H_1)$, where edges occur in $H_1$ if the nodes corresponding to $\alpha, \beta \in \ICC \cup \ISS$ share a rook. 
We similarly define the dependency graph $H_2$ of $\mathcal{E}_2$ to be the graph with vertex set $V(H_2) = \ICC \cup \IRR \cup \ISS$, and edge set $E(H_2)$, where edges occur in $H_2$ if the nodes corresponding to $\alpha, \beta \in \ICC \cup \IRR \cup \ISS$ share a rook. 

For $j=1,2$, we define 
\[ \Delta_j := \sum_{\{\alpha,\beta\}\in E(H_j)} \p(E_\alpha \cap E_\beta), \] 
and 
\[ \delta_j := \max_{\alpha \in V(H_j)} \sum_{\beta : \{\alpha,\beta\} \in E(H_j)} \p(E_\beta). \]
It is easy to see that $\Delta_1 = \sum_{\alpha \in \ICC \cup \ISS} \sum_{\a\ne\beta \in D_\alpha \cap (\ICC \cup \ISS)} p_{\alpha\beta}$, and hence is equal to $f_2$ computed in the proof of Theorem~\ref{Stirling:one:approx}; similarly, $\Delta_2$ is equal to $h_2$.  
Finally, $\delta_j$ is the sum over all overlapping sets of rooks of the probability that two rooks will be attacking, which is the same for all rooks and hence the maximum does not impact the value.
We are thus able to apply Theorem~\ref{Suen_inequality} separately to the two cases. 

The lower bounds follow from the approach in the proof of~\cite[Proposition~7.7]{crane2016probability}.
\end{proof}

We performed a numerical comparison on four different methods: 
\begin{enumerate}
\item Permutation coupling ~\cite[Theorem~5]{Stirling};
\item Independence coupling ~\cite[Theorem~6]{Stirling};
\item Theorem~\ref{Stirling:one:approx};
\item Theorem~\ref{LS2}. 
\end{enumerate}
Based on numerical calculations using a few large values of $n$, using $r=1,2,\ldots$, the Lov\'asz local lemma appears to outperform the other methods for accuracy of the lower bound, with the method in Theorem~\ref{Stirling:one:approx} coming in a distant second.  
For the upper bound, in each of the examples investigated, the method in Theorem~\ref{Stirling:one:approx} was more accurate than the other competing methods. 

\section*{Acknowledgements}
The authors gratefully acknowledge several anonymous referees whose comments enhanced the readability of the present paper.

\bibliographystyle{acm}

\end{document}